\newtheorem{theorem}{Theorem}[section]
\newtheorem{lemma}[theorem]{Lemma}
\title[Another proof of Hill's criterion]{An alternative proof of Hill's criterion of freeness for abelian groups}
\author[J. E. Mac\'{\i}as-D\'{\i}az]{J. E. Mac\'{\i}as-D\'{\i}az}
\address{Departamento de Matem\'{a}ticas y F\'{\i}sica, Universidad Aut\'{o}noma de Aguascalientes, Avenida Universidad 940, Ciudad Universitaria, Aguascalientes, Ags. 20131, Mexico}
\email{jemacias@correo.uaa.mx}
\subjclass{Primary 20K20; Secondary 03E75, 20K25}
\keywords{Abelian group, freeness, Hill's criterion, purity, $G (\aleph _0)$-family.\\ {\em Palabras y frases clave.} Grupo abeliano, libertad, criterio de Hill, pureza, $G (\aleph _0)$-familia}
\date{\today}
\begin{document}

\begin{abstract}
In this note, we provide a different proof of Hill's criterion of freeness for abelian groups. Our proof hinges on the construction of suitable $G (\aleph _0)$-families of subgroups of the links in Hill's theorem and, ultimately, on the construction of such a family of pure subgroups of the group itself. \\

\noindent
{\scshape Resumen.} En este trabajo, se proporciona una nueva demostraci\'{o}n del criterio de Hill para grupos abelianos libres. La demostraci\'{o}n se basa en la construcci\'{o}n de una $G (\aleph _0)$-familia de subgrupos en los eslabones del teorema de Hill y, prioritariamente, en la construcci\'{o}n de una familia tal de subgrupos puros.
\end{abstract}

\maketitle

\section{Introduction\label{Sec:Intro}}

In $1934$, Lev Pontryagin proved that a countable, torsion-free abelian group is free if and only if every finite rank, pure subgroup is free \cite {Pontryagin}. Equivalently, every properly ascending chain of subgroups of the same finite rank is finite. From the proof of this criterion, it follows that a torsion-free abelian group $G$ is free if there exists an ascending chain 
\begin{equation}
0 = G _0 < G _1 < \dots < G _n < \dots \quad (n < \omega), \label{Eq:PontChain}
\end{equation}
consisting of pure subgroups of $G$ whose union is equal to $G$, such that every $G _n$ is free and countable. Here, a subgroup $H$ of the abelian group $G$ is \emph {pure} if solubility in $G$ of every equation of the form $n x = h \in H$, with $n \in \mathbb {Z}$, implies its solubility in $H$. Also, we say that $G$ is \emph {torsion-free} if $n = 0$ or $g = 0$, whenever $n \in \mathbb {Z}$ and $g \in G$ satisfy $n g = 0$.

Later, in $1970$, Hill established that, in order for an abelian group $G$ to be free, it is sufficient to prove that it is the union of a countable ascending chain \eqref {Eq:PontChain} consisting of free, pure subgroups \cite {Hill}. In other words, he proved the following theorem, establishing thus that the countability condition on the cardinality of the links of the chain was superfluous.

\begin{theorem}[Hill's criterion of freeness] \label{Thm:Hill}
A torsion-free abelian group $G$ is free if there exists a countable ascending chain
\begin{equation}
0 = G _0 < G _1 < \dots < G _n < \dots \quad (n < \omega)
\end{equation}
of subgroups of $G$, such that:
\begin{enumerate}
	\item[\rm (a)] every $G _n$ is free,
	\item[\rm (b)] every $G _n$ is a pure subgroup of $G$, and
	\item[\rm (c)] $G = \bigcup _{n < \omega} G _n$.
\end{enumerate}
\end{theorem}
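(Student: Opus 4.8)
The plan is to prove Theorem~\ref{Thm:Hill} by producing inside $G$ a $G(\aleph_0)$-family of a suitable kind, where by a \emph{$G(\aleph_0)$-family} of an abelian group $A$ I mean a set $\mathcal{F}$ of subgroups of $A$ with $0\in\mathcal{F}$, closed under unions of chains, and such that for every $H\in\mathcal{F}$ and every countable $S\subseteq A$ there is $H'\in\mathcal{F}$ with $H\cup S\subseteq H'$ and $H'/H$ countably generated. The first step is the (purely formal) reduction: if a torsion-free group $A$ carries a $G(\aleph_0)$-family $\mathcal{F}$ in which every countably generated quotient $H'/H$ of two members $H\subseteq H'$ is free, then $A$ is free. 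To see this one well-orders $A$ and, using the countable-extension property at successors and unions at limits, builds a continuous chain $0=A_{0}<A_{1}<\dots<A_{\alpha}<\dots$ of members of $\mathcal{F}$ with union $A$ and with each $A_{\alpha+1}/A_{\alpha}$ countably generated, hence free, and then pastes together lifts of bases of the successive quotients to get a basis of $A$. Since every member of the family I shall build will moreover be pure in $G$, the relevant quotients will automatically be torsion-free, so that only their freeness --- equivalently, by Pontryagin's criterion, the freeness of each of their finite-rank subgroups --- will remain to be arranged.

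To construct this family I would work along the given chain. Since each $G_{n}$ is free it admits $G(\aleph_0)$-families all of whose quotients are free, for instance the family of spans of subsets of a fixed basis; the point is to choose such families \emph{coherently}. I would thus construct, recursively in $n$, a $G(\aleph_0)$-family $\mathcal{B}_{n}$ of $G_{n}$ with $\mathcal{B}_{0}=\{0\}$, with $G_{n}\in\mathcal{B}_{n}$, with every member of $\mathcal{B}_{n}$ pure in $G_{n}$ and with all countably generated quotients of members of $\mathcal{B}_{n}$ free, and --- crucially --- coherent in the sense that $\mathcal{B}_{n}\subseteq\mathcal{B}_{n+1}$, that $K\cap G_{n}\in\mathcal{B}_{n}$ for every $K\in\mathcal{B}_{n+1}$, and that $\mathcal{B}_{n+1}$ is closed under finite sums. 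Passing from $\mathcal{B}_{n}$ to $\mathcal{B}_{n+1}$ cannot be done by extending a basis of $G_{n}$ to one of $G_{n+1}$, since $G_{n}$ need not be a direct summand of $G_{n+1}$; instead one sifts, out of an auxiliary $G(\aleph_0)$-family of the free group $G_{n+1}$, those subgroups properly aligned with $\mathcal{B}_{n}$, and then verifies that the sifted collection retains the countable-extension property. That verification is a back-and-forth: a candidate subgroup of $G_{n+1}$ is enlarged, its trace on $G_{n}$ is repaired by invoking the extension property of $\mathcal{B}_{n}$, the result is re-purified, and one iterates $\omega$ times and takes the union.

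With the coherent system $(\mathcal{B}_{n})_{n<\omega}$ in hand I would put
\[
\mathcal{F}=\bigl\{\,H\le G:\ H\ \text{is pure in}\ G\ \text{and}\ H\cap G_{n}\in\mathcal{B}_{n}\ \text{for every}\ n<\omega\,\bigr\}.
\]
That $0\in\mathcal{F}$ and that $\mathcal{F}$ is closed under unions of chains are immediate from the corresponding facts about purity and about the $\mathcal{B}_{n}$. The coherence is exactly what forces the countably generated quotients of members of $\mathcal{F}$ to be free: for $H\subseteq H'$ in $\mathcal{F}$ with $H'/H$ countably generated, $H'/H$ is the increasing union of the subgroups $(H'\cap G_{n}+H)/H\cong (H'\cap G_{n})/(H\cap G_{n})$, each free as a countably generated quotient of two members of $\mathcal{B}_{n}$, while the $(n+1)$-st quotient of this union is isomorphic to $(H'\cap G_{n+1})/\bigl((H'\cap G_{n})+(H\cap G_{n+1})\bigr)$, again a countably generated quotient of two members of $\mathcal{B}_{n+1}$ (here one uses $\mathcal{B}_{n}\subseteq\mathcal{B}_{n+1}$ and closure under finite sums), hence free; so $H'/H$ itself is free. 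Finally, the countable-extension property of $\mathcal{F}$ follows by the same kind of back-and-forth as in the previous paragraph, now carried out over all the $G_{n}$ simultaneously and interleaved with taking pure closures in $G$. Once $\mathcal{F}$ is recognised as a $G(\aleph_0)$-family with free countably generated quotients, the reduction of the first paragraph gives that $G$ is free.

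The main obstacle is concentrated in the coherent recursion and in the two verifications of the countable-extension property, for $\mathcal{B}_{n+1}$ and for $\mathcal{F}$: these are the only places that require genuine work, since each demands an $\omega$-length approximation that must simultaneously repair the traces on all the lower links while keeping the subgroups pure and the relevant quotients free. The supporting ingredients --- Pontryagin's criterion for countable torsion-free groups, the freeness of a union of a countable chain of free groups with split inclusions, the heredity of purity under intersection with the $G_{n}$ and under unions of chains, and the fact that every finite-rank subgroup of $G$ already lies in some $G_{n}$ and is therefore free --- are all routine.
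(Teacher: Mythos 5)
Your overall strategy coincides with the paper's: reduce freeness to the existence of a $G(\aleph_0)$-family of pure subgroups of $G$ whose countably generated quotients are free, obtain that family as $\{H : H\cap G_n \in \mathcal{B}_n \text{ for all } n\}$ for suitable families $\mathcal{B}_n$ inside the free groups $G_n$, and finish by transfinite induction together with Pontryagin's criterion. The gap lies in how the level-$n$ families are made compatible across $n$, which is exactly where the content of Hill's theorem (as opposed to Pontryagin's) is concentrated. You postulate a recursively built coherent system with $\mathcal{B}_n\subseteq\mathcal{B}_{n+1}$, traces $K\cap G_n\in\mathcal{B}_n$, closure under finite sums, purity in $G$, and freeness of all countably generated quotients of nested members, and you defer its construction to an unspecified ``sifting'' and ``back-and-forth.'' This is not a routine verification. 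If $\mathcal{B}_{n+1}$ is to consist of spans of subsets of a basis of $G_{n+1}$ --- the only obvious source of a family all of whose quotients are free --- then members of $\mathcal{B}_n$ will in general not be of that form, since $G_n$ need not be spanned by part of a basis of $G_{n+1}$, so $\mathcal{B}_n\subseteq\mathcal{B}_{n+1}$ fails; if instead you enlarge $\mathcal{B}_{n+1}$ by closing under sums with members of $\mathcal{B}_n$, it is no longer clear that quotients of nested members stay free or that the countable-extension property survives. Nothing in the proposal establishes that the postulated system exists, and both your freeness argument for $H'/H$ (the filtration by $(H'\cap G_n+H)/H$) and your extension property for $\mathcal{F}$ rest on it.

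The paper resolves this with a weaker and more concrete compatibility condition that your plan is missing: fix a basis $X_n$ of each $G_n$ independently, let $\mathcal{B}_n$ be the spans of subsets of $X_n$, and keep only $\mathcal{B}'_n=\{A\in\mathcal{B}_n : A+G_i \text{ is pure in } G \text{ for every } i<\omega\}$; no containment between consecutive levels is imposed. The purity of the sums $A+G_i$ is something one can actually arrange by an $\omega$-step closure (adjoining representatives of the purifications of $(A+G_i)/(A_0+G_i)$), and it is precisely what is needed downstream: it yields that $A+G_n$ is pure in $G$ for every $A$ in the global family, and, since each $A\cap G_k$ is generated by part of $X_k$ and hence is a direct summand of $G_k$, that every finite-rank pure subgroup of $G/A$ is free --- the form of the statement that feeds into Pontryagin's criterion for the countable successor quotients. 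To complete your argument you would either have to carry out your coherent recursion in full (and as stated I do not see how), or replace your coherence conditions by something like the paper's purity-of-sums condition.
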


In this note, we give a proof of Hill's criterion different from the one provided in \cite {Hill}. Our proof hinges on the construction of suitable classes of subgroups of the groups $G _n$ and, ultimately, on the construction of such a family consisting of pure subgroups of $G$. Section \ref {Sec:Proof} of this work contains the proof of Theorem \ref {Thm:Hill}, while Section \ref {Sec:Lemmas} presents some preliminary results.

\section{Preparatory lemmas\label{Sec:Lemmas}}

The following is a general result which will be used in the proof of Theorem \ref {Thm:Hill}. We refer to \cite {Jech} for definitions of the set-theoretical concepts.

\begin{lemma} 
\label{Lemma:Gen}
An abelian group $G$ is free if there exists a continuous, well-ordered, ascending chain
\begin{equation}
0 = A _0 < A _1 < \dots < A _\gamma < A _{\gamma + 1} < \dots \quad (\gamma < \tau) \label{Eq:GralChain}
\end{equation}
of subgroups of $G$, such that:
\begin{enumerate}
	\item[\rm (a)] every factor group $A _{\gamma + 1} / A _\gamma$ is free, and
	\item[\rm (b)] $G = \bigcup _{\gamma < \tau} A _\gamma$.
\end{enumerate}
\end{lemma}

\begin{proof}
The conclusion follows from the fact that $G$ is isomorphic to the direct sum of the factor groups $A _{\gamma + 1} / A _\gamma$, for $\gamma < \tau$.
\end{proof}

Recall that a \emph {$G (\aleph _0)$-family} of an abelian group $G$ is a collection $\mathcal {B}$ of subgroups of $G$, which satisfies the following properties:
\begin{enumerate}
	\item[(i)] $0$ and $G$ belong to $\mathcal {B}$,
	\item[(ii)] $\mathcal {B}$ is closed under unions of ascending chains, and
	\item[(iii)] for every $A _0 \in \mathcal {B}$ and every countable set $H \subseteq G$, there exists $A \in \mathcal {B}$ which contains both $A _0$ and $H$, such that $A / A _0$ is countable.
\end{enumerate}
Clearly, every abelian group has a $G (\aleph _0)$-family, namely, the collection of all its subgroups.

For the rest of this section, we will assume the hypotheses of Theorem \ref {Thm:Hill}. Under these circumstances, we fix a basis $X _n$ of $G _n$ for every $n < \omega$, and let $\mathcal {B} _n$ be the family of all subgroups of $G _n$ generated by subsets of $X _n$. Clearly, every member of $G _n$ is a direct summand of $G _n$ and, thus, a pure subgroup of $G$.

\begin{lemma}
The collection $\mathcal {B} ^\prime _n = \{ A \in \mathcal {B} _n : A + G _i \text{ is pure in } G \text {, for every } i < \omega \}$ is a $G (\aleph _0)$-family of pure subgroups of $G _n$, for every $n < \omega$.
\end{lemma}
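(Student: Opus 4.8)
The plan is to verify the three conditions (i)--(iii) of a $G(\aleph_0)$-family for $\mathcal{B}'_n$, regarded as a family of subgroups of $G_n$; that every member of $\mathcal{B}'_n$ is pure in $G_n$ needs no separate argument, since a subgroup of $G_n$ generated by a subset of the basis $X_n$ is a direct summand. Throughout I rely on two elementary facts about torsion-free abelian groups: a subgroup $N$ of a torsion-free group $M$ is pure in $M$ if and only if $M/N$ is torsion-free; and purity is transitive and passes to intermediate subgroups. I also use that for $A\in\mathcal{B}_n$ the set $A\cap X_n$ is a basis of $A$, that $A\subseteq B$ in $\mathcal{B}_n$ holds exactly when $A\cap X_n\subseteq B\cap X_n$, and that $\mathcal{B}_n$ is closed under unions of ascending chains (the union of a chain being generated by the union of the corresponding subsets of $X_n$). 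The remark that organizes the whole proof is that, for $A\in\mathcal{B}_n$, the condition defining $\mathcal{B}'_n$ is really a constraint only at the indices $i<n$: if $i\ge n$ then $A\subseteq G_n\subseteq G_i$, so $A+G_i=G_i$ is pure in $G$ by hypothesis (b); and for $i<n$, since $G_i$ and $G_n$ are pure in $G$, the subgroup $A+G_i$ is pure in $G$ if and only if $(A+G_i)/G_i$ is pure in the torsion-free group $G_n/G_i$.

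Conditions (i) and (ii) are then immediate. For (i): $0\in\mathcal{B}_n$ with $0+G_i=G_i$ pure in $G$, and $G_n\in\mathcal{B}_n$ with $G_n+G_i=G_{\max\{n,i\}}$ pure in $G$. For (ii): if $(A_\alpha)$ is an ascending chain in $\mathcal{B}'_n$ with union $A$, then $A\in\mathcal{B}_n$, and for each $i$ the subgroup $A+G_i=\bigcup_\alpha(A_\alpha+G_i)$ is an ascending union of pure subgroups of $G$, hence pure in $G$; thus $A\in\mathcal{B}'_n$.

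Condition (iii) is where the work lies, and I would establish it by an $\omega$-step closure. Given $A_0\in\mathcal{B}'_n$ and a countable $H\subseteq G_n$, set $B_0=A_0$ and let $B_1\in\mathcal{B}_n$ be generated over $A_0\cap X_n$ by the countable union of the $X_n$-supports of the elements of $H$, so that $H\subseteq B_1$ and $B_1/A_0$ is countable. For $k\ge 1$, set $B_{k+1}=B_k^*$, where the \emph{purification step} $B\mapsto B^*$ produces, from any $B\in\mathcal{B}_n$ with $B/A_0$ countable, a member $B^*\in\mathcal{B}_n$ satisfying $B\subseteq B^*$, $B^*/A_0$ countable, and $(B+G_i)\cap mG\subseteq m(B^*+G_i)$ for every $i<n$ and every $m\ge 1$. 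Then $A:=\bigcup_k B_k$ lies in $\mathcal{B}_n$, contains $A_0\cup H$, and has $A/A_0$ countable; moreover $A\in\mathcal{B}'_n$, because for $i\ge n$ there is nothing to prove and, for $i<n$, any element of $(A+G_i)\cap mG$ already lies in some $(B_k+G_i)\cap mG$ and hence in $m(B_{k+1}+G_i)\subseteq m(A+G_i)$, so $A+G_i$ is pure in $G$.

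It remains to carry out the purification step. Given $B$, fix $i<n$ and pass to the torsion-free group $E=G_n/G_i$; write $\bar B=(B+G_i)/G_i$ and $\bar A_0=(A_0+G_i)/G_i$. Then $\bar B/\bar A_0$ is a countable epimorphic image of $B/A_0$, and $E/\bar A_0\cong G_n/(A_0+G_i)$ is torsion-free because $A_0+G_i$ is pure in $G$ and hence in $G_n$. Consequently the purification $\bar B_*$ of $\bar B$ in $E$ is countable over $\bar B$: $\bar B_*/\bar B$ is the torsion subgroup of $E/\bar B\cong(E/\bar A_0)/(\bar B/\bar A_0)$, the quotient of a torsion-free group by a countable subgroup, and the purification of a countable subgroup of a torsion-free group is countable (multiplication by a fixed nonzero integer is injective). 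I would then choose elements of $G_n$ lifting a countable generating set of $\bar B_*$ over $\bar B$, collect the countably many elements of $X_n$ occurring in their expansions into a set $S_i\subseteq X_n$, and let $B^*$ be generated by $(B\cap X_n)\cup S_0\cup\cdots\cup S_{n-1}$. This $B^*$ lies in $\mathcal{B}_n$, has $B^*/A_0$ countable, and for every $i<n$ the subgroup $(B^*+G_i)/G_i$ contains $\bar B_*$; the last property gives $(B+G_i)\cap mG\subseteq m(B^*+G_i)$, since for $c=mg$ in that intersection one has $c=b+h\in G_n$ with $b\in B$ and $h\in G_i\subseteq G_n$, so $g\in G_n$ by purity of $G_n$ in $G$, and then $\bar g=g+G_i$ satisfies $m\bar g\in\bar B$, so $\bar g\in\bar B_*\subseteq(B^*+G_i)/G_i$, i.e.\ $g\in B^*+G_i$. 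The main obstacle is exactly this step: the new generators must be chosen inside $G_n$ (so that $B^*$ is again a member of $\mathcal{B}_n$) and in countable number, which is precisely why one reduces to the finitely many indices $i<n$ and descends to the torsion-free quotients $G_n/G_i$, where purifications of countable subgroups stay countable.
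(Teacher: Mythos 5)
Your proof is correct and follows essentially the same route as the paper's: an $\omega$-chain of members of $\mathcal{B}_n$ above $A_0$, each link absorbing countably many representatives of the purifications of the previous link modulo the relevant $G_i$, so that the union is pure in $G$ after adding each $G_i$. The only (welcome) refinements are your observation that just the finitely many indices $i<n$ matter and your explicit justifications that the purification of a countable subgroup of a torsion-free quotient is countable and admits representatives inside $G_n$ --- points the paper's proof leaves implicit.
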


\begin{proof}
All we need to check is that the countability condition is satisfied, since the other conditions of a $G (\aleph _0)$-family are obvious. So, let $A _0 \in \mathcal {B} ^\prime _n$, and let $H _0$ be a countable subset of $G _n$. Moreover, let $m < \omega$, and assume that we have already constructed a chain
\begin{equation}
A _0 < A _1 < \dots < A _m \label{Eq:Chain4}
\end{equation}
of groups in $\mathcal {B} _n$, such that:
\begin{enumerate}
	\item[1.] $H _0$ is contained in $A _1$,
	\item[2.] for every $j < m$, the group $A _{j + 1} / A _j$ is countable, and
	\item[3.] for every $j < m$ and every $i < \omega$, $(A _{j + 1} + G _i) / (A _0 + G _i)$ contains the purification of $(A _j + G _i) / (A _0 + G _i)$ in $G / (A _0 + G _i)$.
\end{enumerate}
To find the next member of \eqref {Eq:Chain4}, for every $i < \omega$, let $V _i \subseteq G _n$ be a complete set of representatives of the purification of $(A _m + G _i) / (A _0 + G _i)$ in $G / (A _0 + G _i)$. The sets $V _i$ are clearly countable, so that $H _{m + 1} = H _0 \cup \bigcup _{i < \omega} V _i$ is likewise countable. Therefore, there exists $A _{m + 1} \in \mathcal {B} _n$ containing both $A _m$ and $H _{m + 1}$, such that $A _{m + 1} / A _m$ is countable. Inductively, we construct a chain
\begin{equation}
A _0 < A _1 < \dots < A _m < \dots \quad (m < \omega) \label{Eq:ChainLemma2}
\end{equation}
of groups in $\mathcal {B} _n$, satisfying properties 1, 2 and 3 above, for every $m < \omega$. 

Evidently, the union $A$ of the links of \eqref {Eq:ChainLemma2} is a member of $\mathcal {B} _n$, $A / A _0$ is countable, and our construction guarantees that $(A + G _i) / (A _0 + G _i)$ is pure in $G / (A _0 + G _i)$. Thus, $A + G _i$ is pure in $G$ and, consequently, $A$ belongs to $\mathcal {B} ^\prime _n$.
\end{proof}

\begin{lemma}
The collection $\mathcal {B} = \{A < G : A \cap G _n \in \mathcal {B} ^\prime _n \text {, for every } n < \omega\}$ is a $G (\aleph _0)$-family of pure subgroups of $G$. \label{Lemma:3}
\end{lemma}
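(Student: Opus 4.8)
The plan is to verify for $\mathcal{B}$ the three defining clauses of a $G(\aleph _0)$-family together with the assertion that its members are pure in $G$; clauses (i) and (ii) and the purity statement are short, while clause (iii) carries essentially all of the work.

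I would dispose of the easy part first. Since $0\cap G _n=0$ and $G\cap G _n=G _n$, and both $0$ and $G _n$ belong to $\mathcal{B} _n$ (generated respectively by $\emptyset\subseteq X _n$ and by $X _n$ itself), while $0+G _i=G _i$ and $G _n+G _i=G _{\max(n,i)}$ are pure in $G$ by hypothesis (b), we get $0,G _n\in\mathcal{B}'_n$ for every $n$, hence $0,G\in\mathcal{B}$; this is clause (i). For clause (ii), if $(A^\alpha)$ is an ascending chain in $\mathcal{B}$ with union $A$, then $A\cap G _n=\bigcup _\alpha(A^\alpha\cap G _n)$ is an ascending chain in $\mathcal{B}'_n$, which is closed under such unions, so $A\cap G _n\in\mathcal{B}'_n$ and $A\in\mathcal{B}$. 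For purity, taking $i=0$ in the definition of $\mathcal{B}'_n$ (so that $(A\cap G _n)+G _i=A\cap G _n$) shows each $A\cap G _n$ is pure in $G$; since $G=\bigcup _n G _n$ we have $A=\bigcup _n(A\cap G _n)$, an ascending union of pure subgroups of $G$, hence $A$ is pure in $G$.

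For clause (iii), fix $A _0\in\mathcal{B}$ and a countable $H\subseteq G$, and fix a map $\sigma\colon\{1,2,\dots\}\to\omega$ taking each value infinitely often. I would build an ascending chain $A _0=C _0\le C _1\le C _2\le\cdots$ with each $C _k/C _{k-1}$ countable, with $H\subseteq C _1$, and with the property that $C _k\cap G _n\in\mathcal{B}'_n$ whenever $n=\sigma(k)$. To pass from $C _{k-1}$ to $C _k$ at step $k$ (first replacing $C _0$ by $\langle C _0,H\rangle$ when $k=1$), set $n=\sigma(k)$, let $B\in\mathcal{B}'_n$ be the value of $C\cap G _n$ produced at the previous step with the same $\sigma$-value (or $B=A _0\cap G _n$ if there is none), and note that $(C _{k-1}\cap G _n)/B$ embeds into the countable group $C _{k-1}/C _{k'}$, hence is countable. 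Applying the countability clause of the $G(\aleph _0)$-family $\mathcal{B}'_n$ of $G _n$ (available by the previous lemma) to $B$ and a countable subset of $G _n$ generating $C _{k-1}\cap G _n$ over $B$, I obtain $A'\in\mathcal{B}'_n$ with $C _{k-1}\cap G _n\subseteq A'$ and $A'/B$ countable; picking a countable $W\subseteq G _n$ with $\langle C _{k-1}\cap G _n,W\rangle=A'$ and putting $C _k=C _{k-1}+\langle W\rangle$, one checks that $C _k\cap G _n=\langle C _{k-1}\cap G _n,W\rangle=A'$ (because $W\subseteq G _n$) and that $C _k/C _{k-1}$ is countable. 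Let $A=\bigcup _{k<\omega}C _k$. Then $A\supseteq A _0\cup H$ and $A/A _0$ is countable, and for each $n$ the groups $C _k\cap G _n$ over the (cofinally many) steps $k$ with $\sigma(k)=n$ form an ascending chain in $\mathcal{B}'_n$ whose union is $A\cap G _n$, so $A\cap G _n\in\mathcal{B}'_n$ and $A\in\mathcal{B}$, as required.

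The main obstacle is the coupling between coordinates: enlarging $C _{k-1}$ so that $C _k\cap G _n$ lands in $\mathcal{B}'_n$ adjoins generators lying in $G _n$, but since the chain is linearly ordered ($G _m\subseteq G _n$ or $G _n\subseteq G _m$) and the bases $X _m$, $X _n$ bear no fixed relation, this typically spoils membership in $\mathcal{B}'_m$ for the other indices $m$; no bounded number of passes can succeed, which is why each coordinate must be revisited infinitely often and the argument pushed to the limit, and why the auxiliary clause ``$A+G _i$ is pure in $G$ for every $i$'' in the definition of $\mathcal{B}'_n$ — already exploited in the previous lemma — is what keeps each individual pass possible. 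The remaining points are bookkeeping: that $C _{k-1}/C _{k'}$ is countable (a finite tower of countable layers), that $\langle W\rangle$ is countable so that $C _k/C _{k-1}$ is, and that every element of $A\cap G _n$ is absorbed at some step with $\sigma$-value $n$ (it lies in some $C _j$, hence in $C _k\cap G _n$ for the next such step $k\ge j$).
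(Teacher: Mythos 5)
Your proof is correct and follows essentially the same back-and-forth strategy as the paper: repeatedly invoke the countability clause of the families $\mathcal{B}'_n$ coordinate by coordinate and pass to the limit using their closure under unions of ascending chains. The only difference is organizational --- you dovetail all coordinates into a single chain via a scheduling function $\sigma$ and use the modular law to make $C_k \cap G_n$ exactly a member of $\mathcal{B}'_n$ at the scheduled steps, whereas the paper runs a two-index array in which $A_m \cap G_k$ is merely sandwiched between members $A^m_k < A^{m+1}_k$ of $\mathcal{B}'_k$ --- and this changes nothing essential.
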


\begin{proof}
Again, only the countability condition merits attention; so, let $A _0 \in \mathcal {B}$, and let $H \subseteq G$ be countable. For every $k < \omega$, let $A _k ^0 = A _0 \cap G _k$. Moreover, let $n < \omega$, and assume that we have already constructed a finite ascending chain
\begin{equation}
A _0 < A _1 < \dots < A _n \label {Eq:ChainFinite}
\end{equation}
of subgroups of $G$, such that all factor groups $A _m / A _0$ are countable, for every $m \leq n$. Furthermore, suppose that each link $A _m$ in \eqref {Eq:ChainFinite} may be expressed as the union of a countable ascending chain
\begin{equation}
0 = A _0 ^m < A _1 ^m < \dots < A _k ^m < \dots \quad (k < \omega)
\end{equation}
of subgroups of $G$, such that:
\begin{enumerate}
\item[(a)] $A _k ^m \in \mathcal {B} _k ^\prime$, for every $k < \omega$ and every $m \leq n$,
\item[(b)] $A _k ^m$ is countable over $A _0 \cap G _k$, for every $k < \omega$ and every $m \leq n$, and
\item[(c)] $A _k ^m < A _m \cap G _k < A _k ^{m + 1}$, for every $k < \omega$ and $m + 1 \leq n$.
\end{enumerate}

For every $k < \omega$, the group $(A _n \cap G _k) / (A _0 \cap G _k)$ is countable, so we may fix a countable set of representatives $Y _k$ of $A _n \cap G _k$ modulo $A _0 \cap G _k$. Moreover, there exists $B _k \in \mathcal {B} ^\prime _k$ containing both $A _0 \cap G _k$ and $Y _k$, such that $B _k$ is countable over $A _0 \cap G _k$. Thus, any set of representatives $H _k$ of $B _k$ modulo $A _0 \cap G _k$ is countable.

In order to construct the next link in \eqref {Eq:ChainFinite}, assume that the groups in the ascending chain $0 = A _0 ^{n + 1} < A _1 ^{n + 1} < \dots < A _k ^{n + 1}$ have been built as needed, for some $k < \omega$, and let $Z _k \subseteq G _k$ be a set of representatives of $A _k ^{m + 1}$ modulo $A _0 \cap G _k$. Then, there exists $A _{k + 1} ^{n + 1} \in \mathcal {B} ^\prime _{k + 1}$ which contains $A _0 \cap G _{k + 1}$ and the countable set $Z _k \cup H _{k + 1} \cup (H \cap G _{k + 1})$, such that $A ^{n + 1} _{k + 1}$ is countable over $A _0 \cap G _{k + 1}$.

Clearly, the group $A = \bigcup _{n < \omega} A _n$ contains both $A _0$ and $H$, and is countable over $A _0$. Moreover, our construction guarantees that $A \cap G _k \in \mathcal {B} _k$, for every $k < \omega$. We conclude that $A \in \mathcal {B}$.
\end{proof}

Before we prove our next result, it is important to notice that $A + G _n$ is a pure subgroup of $G$, for every $A \in \mathcal {B}$ and every $n < \omega$. Indeed, that $(A + G _n) \cap G _{n + 1}$ is pure in $G$ follows from the fact that $A \cap G _{n + 1} \in \mathcal {B} ^\prime _{n + 1}$. Next, assume that $(A + G _n) \cap G _k$ is pure in $G$, for some $k > n$. It is easy to check that 
\begin{equation}
\frac {(A + G _k) \cap G _{k + 1}} {(A + G _n) \cap G _{k + 1}} \cong \frac {G _k} {(A + G _n) \cap G _k},
\end{equation}
whence it follows that $(A + G _n) \cap G _{k + 1}$ is pure in $G$. The claim is readily established after noticing that $A + G _n = \bigcup _{k < \omega} (A + G _n) \cap G _k$.

\begin{lemma}
\label{Lemma:4}
For every $A \in \mathcal {B}$, finite rank, pure subgroups of $G / A$ are free.
\end{lemma}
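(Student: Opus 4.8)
The plan is to take an arbitrary finite rank, pure subgroup of $G / A$, write it as $P / A$ with $A \leq P \leq G$ and with $P / A$ of finite rank $k < \omega$, and then produce a \emph{single} free subgroup of $P / A$ of rank $k$ which is pure in $P / A$; a rank count will force that subgroup to be all of $P / A$.

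First I would record two standing remarks. Since $A$ is pure in the torsion-free group $G$, the quotient $G / A$ is torsion-free, and hence so is $P / A$. Moreover, by the observation made just before the statement, $G _n + A$ is pure in $G$ for every $n < \omega$, so each $(G _n + A) / A$ is pure in $G / A$. Now choose $x _1, \dots, x _k \in P$ whose cosets modulo $A$ form a maximal independent set in $P / A$, and, using $G = \bigcup _{m < \omega} G _m$, pick $n < \omega$ with $x _1, \dots, x _k \in G _n$. Put
\[
Q = \frac{P \cap (G _n + A)}{A} = \frac{P}{A} \cap \frac{G _n + A}{A}.
\]
The core of the argument is to verify three properties of $Q$: (i) $Q$ is pure in $G / A$, being an intersection of the two pure subgroups $P / A$ and $(G _n + A) / A$, since in a torsion-free group an intersection of pure subgroups is pure; in particular $Q$ is pure in $P / A$; (ii) $Q$ is free, because it is a subgroup of $(G _n + A) / A \cong G _n / (G _n \cap A)$, which is free since $G _n \cap A \in \mathcal{B} ^\prime _n \subseteq \mathcal{B} _n$ is generated by a subset of the basis $X _n$ and is therefore a direct summand of $G _n$, and every subgroup of a free abelian group is free; and (iii) $Q$ has rank exactly $k$, since the cosets $x _1 + A, \dots, x _k + A$ lie in $Q$ and $Q \leq P / A$. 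Thus $Q$ is free of finite rank $k$, i.e. $Q \cong \mathbb{Z} ^k$.

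To conclude, consider the short exact sequence $0 \to Q \to P / A \to (P / A) / Q \to 0$. Additivity of torsion-free rank gives that $(P / A) / Q$ has rank $0$; by (i) it is torsion-free; hence it is trivial, so $P / A = Q \cong \mathbb{Z} ^k$ is free. The only step I expect to need genuine care is (ii): one must use that a member of $\mathcal{B} _n$ is a direct summand of $G _n$, so that the relevant factor group of $G _n$ is free, and then invoke the standard fact that subgroups of free abelian groups are free in order to see that the finite rank group $Q$ is free; the remaining steps are a routine combination of purity, the second isomorphism theorem, and rank additivity.
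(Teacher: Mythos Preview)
Your proof is correct and follows essentially the same route as the paper's: pick a single index $n$ so that representatives of a maximal independent set of $P/A$ lie in $G_n$, intersect with $(G_n+A)/A$, use purity of $A+G_n$ (the remark preceding the lemma) together with purity of $P$ to see this intersection is pure of full rank and hence equals $P/A$, and deduce freeness from the fact that $A\cap G_n\in\mathcal B'_n$ is a summand of $G_n$. The only cosmetic difference is that the paper works in $G$ rather than in $G/A$: it writes $D\cap(A+G_k)=A+(D\cap G_k)$, then splits $D\cap G_k=(A\cap G_k)\oplus B$ using a complement of $A\cap G_k$ in $G_k$ to get $D=A\oplus B$ directly, whereas you pass to the quotient and invoke ``subgroups of free abelian groups are free'' for $Q\leq (G_n+A)/A\cong G_n/(A\cap G_n)$; both arguments ultimately rest on the same facts.
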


\begin{proof}
Let $A \in \mathcal {B}$, and let $D$ be a pure subgroup of $G$ containing $A$, such that $D / A$ is of finite rank. If $S = \{ d _1 , \dots , d _n \}$ is a complete set of representatives of a maximal independent system of $D$ modulo $A$, then there exists $k < \omega$ such that $S \subseteq G _k$. Then $A + (D \cap G _k) = D \cap (A + G _k)$ is a pure subgroup of $G$ containing $S$, which lies between $A$ and $D$. Therefore, $D = A + (D \cap G _k)$. The fact that $A \cap G _k \in \mathcal {B} ^\prime _k$ implies that $A \cap G _k$ is a summand of $G _k$. Therefore, there exists a finite rank, free group $B$, such that $D \cap G _k = (A \cap G _k) \oplus B$. Notice that
\begin{equation}
D = A + (D \cap G _k) = A + ((A \cap G _k) \oplus B) = A \oplus B,
\end{equation}
which implies that $D / A$ is free.
\end{proof}

\section{Proof of the main result\label{Sec:Proof}}

\begin{proof}[Proof of Theorem \ref {Thm:Hill}]
Let $\alpha$ be any nonzero ordinal, and let
\begin{equation}
0 = A _0 < A _1 < \dots < A _\gamma < A _{\gamma + 1} \dots \quad (\gamma < \alpha)
\end{equation}
be an ascending chain of subgroups in $\mathcal {B}$, such that all factor groups $A _{\gamma + 1} / A _\gamma$ are free. If $\alpha$ is a limit ordinal, then we let $A _\alpha = \bigcup _{\gamma < \alpha} A _\gamma$. Otherwise, there exists an ordinal $\beta$ such that $\alpha = \beta + 1$. In this case, if there exists $x \in G \setminus A _\beta$, we let $A _{\beta + 1} \in \mathcal {B}$ contain both $x$ and $A _\beta$, such that $A _{\beta + 1} / A _\beta$ be countable. Lemma \ref {Lemma:4} implies now that finite rank, pure subgroups of $A _{\beta + 1} / A _\beta$ are free. Consequently, $A _{\beta + 1} / A _\beta$ is free by Pontryagin's criterion. 

Using transfinite induction, we construct a continuous, well-ordered, ascending chain \eqref {Eq:GralChain} of subgroups of $G$ satisfying properties (a) and (b) of Lemma \ref {Lemma:Gen}. We conclude that $G$ is free.
\end{proof}

\end{document}